\providecommand{\U}[1]{\protect\rule{.1in}{.1in}}
\newtheorem{theorem}{Theorem}
\theoremstyle{plain}
\newtheorem{corollary}{Corollary}[section]
\newtheorem{lemma}{Lemma}[section]
\numberwithin{equation}{section}
\numberwithin{theorem}{section}
\newcommand{\sgn}{\text{sgn}}
\begin{document}
\title[Spectral Analysis for the Exceptional $X_m$-Jacobi Equation]{Spectral Analysis for the Exceptional $X_m$-Jacobi Equation}
\author{Constanze Liaw}
\address{Department of Mathematics, Baylor University, One Bear Place \#97328, Waco, TX 76798-7328}
\email{Constanze\_Liaw@baylor.edu}
\author{Lance Littlejohn}
\address{Department of Mathematics, Baylor University, One Bear Place \#97328, Waco, TX 76798-7328}
\email{Lance\_Littlejohn@baylor.edu}
\author{Jessica Stewart}
\address{Department of Mathematics and Computer Science, Goucher College, 1024 Dulaney Valley Road, Baltimore, MD 21204}
\email{Jessica\_Stewart@baylor.edu}
\maketitle
	
\begin{abstract}
We provide the mathematical foundation for the $X_m$-Jacobi spectral theory.
Namely, we present a self-adjoint operator associated to the differential expression with the exceptional $X_m$-Jacobi orthogonal polynomials as eigenfunctions. This proves that those polynomials are indeed eigenfunctions of the self-adjoint operator (rather than just formal eigenfunctions).
Further, we prove the completeness of the exceptional $X_m$-Jacobi orthogonal polynomials (of degrees $m, m+1, m+2, \hdots$) in the Lebesgue--Hilbert space with the appropriate weight. In particular, the self-adjoint operator has no other spectrum.
\end{abstract}

	
\section{Introduction}
The classical orthogonal polynomials of Laguerre, Jacobi, and Hermite are the foundational examples of orthogonal polynomial theory.  As shown by Routh in 1884 \cite{Routh}, but most often attributed to Bochner in 1929 \cite{Bochner}, these three families of polynomials are, up to affine transformation of $x$,  the only polynomial sequences satisfying the following two conditions: First, they contain an infinite sequence of polynomials $\left\{p_n\right\}_{n=0}^\infty$, where $p_n$ has degree $n$, such that for each $n\in\mathbb N_0$, $y=p_n$ satisfies a second order eigenvalue equation of the form
\[p(x)y''+q(x)y'+r(x)y=\lambda y\,,\]
where the polynomials $p(x)$, $q(x)$, and $r(x)$ are determined by the corresponding differential expression (Laguerre, Jacobi or Hermite). Second, each of the eigenpolynomials is orthogonal in a weighted $L^2$ space where the associated weight has finite moments.

In recent years, there has been interest in the area of \textit{exceptional} orthogonal polynomials, which presents a way to generalize Bochner's classification theorem.  The most striking difference between classical orthogonal polynomials and their exceptional counterparts is that the exceptional sequences allow for gaps in the degrees of the polynomials.  We denote an exceptional orthogonal polynomial sequence $\left\{p_{m,n}\right\}_{n\in \mathbb N_0\diagdown A}$ by using ``$X_m$'', where the subscript $m=\left|A\right|$ denotes the number of gaps (or the \textit{codimension} of the sequence).  We require that the associated second order differential expression preserve the space spanned by the exceptional polynomials, but no space with smaller codimension.  Consequently, the coefficients of the second order differential equation are not necessarily polynomial. Remarkably, despite removing any finite number of polynomials, the sequences remain complete in their associated space.

Research in the area of exceptional orthogonal polynomials did not develop from a desire to generalize Bochner's theorem; rather, the exceptional polynomials were discovered in the context of quantum mechanics where researchers were looking for a new approach, outside of the classical Lie algebraic \cite{Gonzalez, Kamran-Olver,Quesne} setting, to solving spectral problems for second order linear differential operators with polynomial eigenfunctions.  In particular, they were discovered in \cite{KMUG, KMUG1} while developing a direct approach \cite{KMUG8} to exact or quasi-exact solvability for spectral problems. The first examples of these exceptional polynomials were introduced in 2009 by G\'{o}mez-Ullate, Kamran and Milson \cite{KMUG, KMUG1}, who completely characterized all $X_1$-polynomial sequences. Their result showed that the only polynomial families of codimension one (in particular, having no solution of degree zero) satisfying a second order eigenvalue problem are the $X_1$-Jacobi and $X_1$-Laguerre polynomials. Explicit examples of the $X_2$ families were given by Quesne \cite{Quesne, Quesne2}, who used the Darboux transformation and shape invariant potentials to find these new families. 

Higher-codimensional families, including the $X_m$-Laguerre and $X_m$-Jacobi exceptional polynomial sequences, were first observed by Odake and Sasaki \cite{Odake-Sasaki1}.  Further generalizations were observed regarding two distinct types of $X_m$-Laguerre polynomials by G\'{o}mez-Ullate, Kamran and Milson \cite{KMUG5, Zeros}.  These $X_m$-Laguerre polynomial families do not contain polynomials of degree $n\in \mathbb{N}$ for $0\leq n\leq m-1$. Furthermore, Liaw, Littlejohn, Milson, and Stewart \cite{Liaw-Littlejohn-Milson-Stewart} show the existence of a third type of $X_m$-Laguerre polynomials.  The Type III $X_m$-Laguerre polynomial sequence omits polynomials of degree $n\in \mathbb{N}_0$ for $1\leq n\leq m$.  This new class of polynomials can be derived from the quasi-rational eigenfunctions of the classical Laguerre differential expression by Darboux transform as well as a gauge transformation of the Type I exceptional $X_m$-Laguerre expression.  

Following the discovery of exceptional polynomials, there has been a desire to study the properties of these polynomials more rigorously.  The explanation for existence via the Darboux transformation of the higher-codimension $X_m$-Jacobi and $X_m$-Laguerre polynomials and a remarkable observation regarding the completeness of the $X_m$-polynomial families was given by G\'{o}mez-Ullate, Kamran and Milson \cite{KMUG2}.  G\'{o}mez-Ullate, Marcell\'{a}n, and Milson studied the interlacing properties of the zeros for both the exceptional Jacobi and exceptional Type I and Type II Laguerre polynomials along with their asymptotic behavior \cite{Zeros}.  The properties of the Type III $X_m$-Laguerre polynomials is studied \cite{Liaw-Littlejohn-Milson-Stewart}. 

Further studies regard the spectral analysis for the polynomials.  The spectral analysis for the $X_1$-Jacobi polynomials (for $m=1$, $A = \{0\}$) may be found in \cite{Liaw-Littlejohn-Stewart} along with an analysis of properties resulting from an extreme parameter choice, and for the $X_1$-Laguerre polynomials, the spectral analysis was completed in \cite{Atia-Littlejohn-Stewart}.  For all three types of the $X_m$-Laguerre polynomials, a complete spectral study is completed in \cite{Liaw-Littlejohn-Milson-Stewart}.

In Section \ref{s-Jacobi} we introduce the $X_m$-Jacobi differential expression along with some properties. In Section \ref{s-SA} we then apply the Glazman-Krein-Naimark theory to obtain a self-adjoint operator associated to the differential expression, for which the corresponding domain contains the exceptional $X_m$-Jacobi orthogonal polynomials (see Theorem \ref{t-SA-operator}). Further, we show the completeness of the exceptional $X_m$-Jacobi orthogonal polynomials (of degrees $m, m+1, m+2, \hdots$) in the Lebesgue--Hilbert space with the appropriate weight (see Theorem \ref{t-completeness}). Summing up, we present the spectral analysis of the $X_m$-Jacobi differential expression.

\section{Some Properties of the Exceptional $\text{X}_m$-Jacobi Expression}\label{s-Jacobi}
We begin by summarizing some properties of the exceptional $\text{X}_m$-Jacobi expression as described in \cite{KMUG,Zeros}. The parameters $\alpha$ and $\beta$ are assumed to satisfy 
\begin{align}\label{e-conditions}
\alpha, \beta>-1\,,\quad \alpha+1-m-\beta\notin \left\{0,1,\ldots,m-1\right\}\,,\quad\text{and}\quad \sgn(\alpha+1-m)=\sgn\beta
\end{align}
in accordance with \cite[Section 5.2]{Zeros}, unless otherwise noted. 

The exceptional $\text{X}_m$-Jacobi polynomial of degree $n\geq m$, $P_{m,n}^{(\alpha,\beta)}$ is  given in terms of the classical Jacobi polynomials $\left\{P_{k}^{(\alpha,\beta)}\right\}_{k=0}^\infty$ by
\begin{align*}
P_{m,n}^{(\alpha,\beta)}(x)
&=
\frac{(-1)^m}{\alpha+1+n-m}
\left[
\frac{1}{2} ( \alpha+\beta +n-m+1)(x-1)P_{m}^{(-\alpha-1,\beta-1)}(x)P_{n-m-1}^{(\alpha+2,\beta)}(x)\right.\\
&\qquad\qquad\qquad\qquad+
\left.(\alpha -m +1)P_{m}^{(-\alpha-2,\beta)}(x)P_{n-m}^{(\alpha+1,\beta-1)}(x) \right].
\end{align*}
The exceptional $\text{X}_m$-Jacobi polynomials satisfy the second-order differential equation $$T_{\alpha, \beta, m}[y](x)=\lambda_n y(x)$$ for $x\in (-1,1)$, where the exceptional $X_m$-Jacobi differential expression is given by
\begin{align}
T_{\alpha,\beta, m}[y](x)&:=(1-x^2)y''(x)+\left(\beta-\alpha-(\beta+\alpha+2)x-2(1-x^2)\left(\log(P_m^{(-\alpha-1,\beta-1)}(x))\right)'\right)y'(x) \nonumber \\
&\quad +\left((\alpha-\beta-m+1)m-2\beta(1-x)\left(\log(P_m^{(-\alpha-1,\beta-1)}(x))\right)'\right)y(x) \label{Differential Expression}
\end{align}
and $\lambda_n=-(n-m)(1+\alpha+\beta+n-m)$.

In Lagrangian symmetric form, the exceptional $\text{X}_m$-Jacobi differential expression \eqref{Differential Expression} writes
\begin{align*}
T_{\alpha,\beta, m}[y](x)&=\frac{1}{W_{\alpha,\beta, m}(x)}\left[\left(W_{\alpha,\beta, m}(x)(1-x^2)y'(x)\right)'\right.\\
& \qquad\qquad\left.+W_{\alpha,\beta, m}(x)\left(m(\alpha-\beta-m+1)-2\beta(1-x)\left(\log(P_m^{(-\alpha-1,\beta-1)}(x))\right)'\right)y(x)\right]
\end{align*} 
for $x\in (-1,1),$ where $W_{\alpha,\beta,m}$ is the exceptional $\text{X}_m$-Jacobi weight function given by 
\begin{equation*}
W_{\alpha,\beta, m}(x)=\frac{(1-x)^\alpha(1+x)^\beta}{\left(P_m^{(-\alpha-1,\beta-1)}(x)\right)^2} \quad \mbox{ for } x\in(-1,1)\,. 
\end{equation*}
The restrictions on $\alpha$ and $\beta$ ensure that $W_{\alpha,\beta,m}(x)$ has no singularities for $x\in [-1,1]$ and consequently, all moments are finite.

The exceptional $X_m$-Jacobi polynomials $\left\{P_{m,n}^{(\alpha,\beta)}\right\}_{n=m}^\infty$ are orthogonal with respect to the weight function $W_{\alpha,\beta, m}(x)$.

The eigenvalue equation $T_{\alpha,\beta, m}[y] = \lambda y$ does not have any polynomial solutions of degree $n$ for $0\leq n \le m-1$. Despite this fact, it is interesting that the exceptional $\text{X}_m$-Jacobi polynomials $\left\{P_{m,n}^{(\alpha,\beta)}\right\}_{n=m}^\infty$ form a complete sequence in the Hilbert-Lebesgue space $L^2((-1,1);W_{\alpha,\beta, m})$, defined by
\[L^2((-1,1);W_{\alpha,\beta, m}):=\left\{f:(-1,1)\rightarrow \mathbb C:f \text{ is measurable and } \int_{-1}^1\left|f\right|^2W_{\alpha,\beta,m}< \infty\right\}\,.
\]

\section{Exceptional $\text{X}_m$-Jacobi Spectral Analysis}\label{s-SA}
We follow the methods outlined in the classical texts of Akhiezer and Glazman \cite{Glazman}, Hellwig \cite{Hellwig}, and Naimark \cite{Naimark}.

The maximal domain associated with $T_{\alpha,\beta, m}[\cdot]$ in the Hilbert space $L^2((-1,1),W_{\alpha,\beta, m})$ is:
\begin{equation}\label{Maximal Domain}
\Delta=\left\{f:(-1,1)\rightarrow \mathbb C\big|f,f'\in AC_{loc}(-1,1); f, T_{\alpha,\beta,m}[f]\in L^2((-1,1),W_{\alpha,\beta, m})\right\}\,.
\end{equation}  The maximal domain $\Delta$ is the largest subspace of functions of $L^2((-1,1); W_{\alpha,\beta, m})$ for which $T_{\alpha, \beta,m}$ maps into $L^2((-1,1); W_{\alpha,\beta, m})$. The associated maximal operator is
\begin{equation*}
S^1_{\alpha,\beta,m}:\mathcal D(S^1_{\alpha,\beta,m})\subset L^2((-1,1),W_{\alpha,\beta, m})\rightarrow L^2((-1,1),W_{\alpha,\beta, m})\,
\end{equation*} where $S^1_{\alpha,\beta,m}$ is defined by
\begin{align}
S^1_{\alpha,\beta,m}[f]&:= T_{\alpha,\beta,m}[f] \label{Maximal Operator}\\
f\in \mathcal{D}(S^1_{\alpha,\beta,m})&:=\Delta\,. \nonumber
\end{align}

For $f, g \in \Delta$, Green's Formula may be written as
\begin{equation}\label{Green's Formula}
\int_{-1}^1T_{\alpha,\beta,m}[f](x)\overline{g}(x)W_{\alpha,\beta, m}(x)\,dx=[f,g](x)\big|_{-1}^1+\int_{-1}^1f(x)T_{\alpha,\beta,m}[\overline{g}](x)W_{\alpha,\beta, m}(x)\,dx
\end{equation}
where $[\cdot,\cdot](\cdot)$ is the sesquilinear form defined by:
\begin{align}
[f,g](x)&=W_{\alpha,\beta, m}(x)(1-x^2)(f'(x)\overline{g}(x)-f(x)\overline{g}'(x))\nonumber \\
&=\frac{(1-x)^{\alpha+1}(1+x)^{\beta+1}}{\left(P_m^{(-\alpha-1,\beta-1)}(x)\right)^2}(f'(x)\overline{g}(x)-f(x)\overline{g}'(x)) \quad (x\in (-1,1))\,\label{Sesquilinear Form}
\end{align} and where 
\[
[f,g](x)\mid_{x=-1}^{x=1}:=[f,g](1)-[f,g](-1)\,.
\] By the definition of $\Delta$ and the classical H\"older's inequality, notice that the limits
\[\left[f,g\right](-1):=\lim_{x\rightarrow -1^+}\left[f,g\right](x)\quad \text{and} \quad
\left[f,g\right](1):=\lim_{x\rightarrow 1^-}\left[f,g\right](x)\]
exist and are finite for each $f,g \in \Delta$.

The adjoint of the maximal operator in $L^2((-1,1); W_{\alpha,\beta,m})$ is the minimal operator,
\[
S^0_{\alpha,\beta,m}:\mathcal D(S^0_{\alpha,\beta,m})\subset L^2((-1,1),W_{\alpha,\beta, m})\rightarrow L^2((-1,1),W_{\alpha,\beta, m})\,
\] where $S^0_{\alpha,\beta,m}$ is defined by
\begin{align*}
S^0_{\alpha,\beta,m}[f]&:= T_{\alpha,\beta,m}[f]\\
f\in \mathcal{D}(S^0_{\alpha,\beta,m})&:=\left\{f\in \Delta\big|\left[f,g\right]\big|_{-1}^1=0 \mbox{ for all } g\in \Delta\right\}\,.
\end{align*}

We seek to find a self-adjoint extension $S_{\alpha,\beta, m}$ in $L^2((-1,1); W_{\alpha,\beta, m})$ generated by $T_{\alpha,\beta, m}[\cdot]$, which has the exceptional $\text{X}_m$-Jacobi polynomials $\left\{P_{m,n}^{(\alpha,\beta)}\right\}_{n=m}^\infty$ as eigenfunctions.  In order to achieve this goal, we need to study the behavior of solutions at the singular endpoints $x=-1$ and $x=1$ so as to determine the deficiency indices and find the appropriate boundary conditions (if any).

First, we obtain the deficiency indices via Frobenius Analysis. They depend on the values of the parameters $\alpha$ and $\beta$.

The endpoints $x=-1$ and $x=1$ are, in the sense of Frobenius, regular singular endpoints of the differential expression $T_{\alpha,\beta, m}[\cdot]=0$.  We first apply Frobenius analysis to the endpoint $x=1$.  By multiplying the exceptional $\text{X}_m$-Jacobi expression $T_{\alpha,\beta, m}[y]$ by $\frac{x-1}{x+1}$, we obtain
\[
\left(\frac{x-1}{x+1}\right)\left(T_{\alpha,\beta, m}[y](x)-\lambda_n y(x)\right)=(x-1)^2y''(x)-(x-1)p(x)y'(x)+q(x)y(x)
\]
with \[p(x)=\frac{\beta-\alpha-(\alpha+\beta+2)x}{x+1}-2\left(\log(P_m^{(-\alpha-1,\beta-1)})\right)'(x-1)\]
and \[q(x)=\left(\frac{x-1}{x+1}\right)\left(-(\alpha-\beta-m+1)m-2\beta\left(\log(P_m^{(-\alpha-1,\beta-1)})\right)'(x-1)\right)\,.\]

Evaluating the above equation at $x=1$ yields the indicial equation
\[0=r(r-1)-rp(1)+q(1)=r(r+\alpha)\,.\]
Therefore, two linearly independent solutions to $T_{\alpha,\beta, m}[y]-\lambda_ny=0$ behave asymptotically (near $x=1$, e.g. on the interval $(0,1)$) like 
\[
z_1(x)=1\quad \mbox{ and }\quad z_2(x)=(x-1)^{-\alpha}
\] near $x=1$.

For all allowable values of $\alpha$ and $\beta$, 
\[
\int_0^1\left|z_1(x)\right|^2W_{\alpha,\beta, m}(x)\,dx<\infty\,;
\]
while 
\[
\int_0^1\left|z_2(x)\right|^2W_{\alpha,\beta, m}(x)\,dx<\infty\,
\] only for $-1<\alpha<1$.

In a similar manner, multiplying the exceptional $\text{X}_m$-Jacobi expression $T_{\alpha,\beta, m}[y]-\lambda_{\alpha,\beta,m}y$ by $(x+1)/(x-1)$, results in an indicial equation
\[
r(r+\beta)=0;
\] and two linearly independent solutions will behave (asymptotically) like 
\[
y_1(x)=1\quad \mbox{ and }\quad y_2(x)=(x+1)^{-\beta}
\] near $x=-1$.

For all allowable values of $\alpha$ and $\beta$, 
\[
\int_{-1}^0\left|y_1(x)\right|^2W_{\alpha,\beta, m}(x)\,dx<\infty\,;
\]
while 
\[
\int_{-1}^0\left|y_2(x)\right|^2W_{\alpha,\beta, m}(x)\,dx<\infty\,
\] only for $-1<\beta<1$.

As a consequence, we have the following results.

\begin{theorem}\label{Frobenius Analysis}
Let $T_{\alpha,\beta, m}[y]-\lambda_{\alpha,\beta,m}$ be the exceptional $\text{X}_m$-Jacobi differential expression \eqref{Differential Expression} on the interval $(-1,1)$.
\begin{enumerate}
 \item{$T_{\alpha,\beta, m}[\cdot]$ is in the limit-point case at $x=-1$ for $\beta\geq 1$ and limit-circle for $-1<\beta<1$.}
  \item{$T_{\alpha,\beta, m}[\cdot]$ is in the limit-point case at $x=1$ for $\alpha\geq 1$ and limit-circle for $-1<\alpha<1$.}
\end{enumerate}
\end{theorem}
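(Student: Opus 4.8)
The plan is to read off both dichotomies from Weyl's limit-point/limit-circle theory, using two standard facts: the classification at a singular endpoint does not depend on the spectral parameter, so it may be checked for the single value $\lambda = \lambda_n$ treated above; and a regular singular endpoint is in the limit-circle case exactly when \emph{every} solution of $T_{\alpha,\beta,m}[y] - \lambda_n y = 0$ is square-integrable against $W_{\alpha,\beta,m}$ in a one-sided neighborhood of that endpoint, and in the limit-point case otherwise. Since, under the hypotheses \eqref{e-conditions}, the polynomial $P_m^{(-\alpha-1,\beta-1)}$ has no zero on $[-1,1]$, we have $W_{\alpha,\beta,m}(x) = c_1 (1-x)^\alpha (1 + o(1))$ as $x \to 1^-$ and $W_{\alpha,\beta,m}(x) = c_2 (1+x)^\beta (1 + o(1))$ as $x \to -1^+$ with constants $c_1, c_2 > 0$; thus every square-integrability test near an endpoint reduces to one against the classical Jacobi weight.

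At $x = 1$ the indicial roots are $0$ and $-\alpha$. When $\alpha \notin \{0, 1, 2, \dots\}$ these roots neither coincide nor differ by a positive integer, so Frobenius theory supplies a fundamental system with $z_1(x) \sim 1$ and $z_2(x) \sim (x-1)^{-\alpha}$ near $x = 1$; the integral computations displayed above then give $z_1 \in L^2((0,1);W_{\alpha,\beta,m})$ for all admissible $\alpha,\beta$, while $z_2 \in L^2((0,1);W_{\alpha,\beta,m})$ precisely when $-1 < \alpha < 1$. Hence $x = 1$ is limit-circle for $-1 < \alpha < 1$ and limit-point for $\alpha > 1$. The identical argument at $x = -1$, now with indicial roots $0$ and $-\beta$ and solutions $y_1(x) \sim 1$, $y_2(x) \sim (x+1)^{-\beta}$, yields limit-circle for $-1 < \beta < 1$ and limit-point for $\beta > 1$.

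What remains is to dispose of the resonant integer values, where a logarithm can appear in the second Frobenius solution. For $\alpha = 0$ the two exponents coincide and the second solution behaves like $\log(1-x)$ near $x = 1$; since $W_{\alpha,\beta,m}$ is then bounded near $x = 1$ and $|\log(1-x)|^2$ is integrable there, both solutions are square-integrable near $x = 1$, so the endpoint is limit-circle, consistent with the range $-1 < \alpha < 1$. For an integer $\alpha \geq 1$ the second solution has leading term $(x-1)^{-\alpha}$ near $x = 1$ (any logarithmic correction is subdominant), so $|z_2(x)|^2 W_{\alpha,\beta,m}(x) \sim c_1 (1-x)^{-\alpha}$, whose integral over $(0,1)$ diverges; thus the endpoint is limit-point, consistent with $\alpha \geq 1$. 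The cases $\beta = 0$ and integer $\beta \geq 1$ at $x = -1$ are handled verbatim. Assembling the four conclusions proves the theorem. The only delicate point is this last bookkeeping of the logarithmic (resonant) cases; everything else is a direct appeal to Weyl's criterion together with the integral estimates already in hand.
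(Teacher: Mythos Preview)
Your argument is correct and follows essentially the same route as the paper: Frobenius indicial analysis at each endpoint, identification of the asymptotic form of a fundamental pair of solutions, and then the square-integrability test against $W_{\alpha,\beta,m}$ to invoke Weyl's dichotomy. In fact you are slightly more careful than the paper, which simply asserts the asymptotics $z_2(x)\sim(x-1)^{-\alpha}$ and $y_2(x)\sim(x+1)^{-\beta}$ without separately treating the resonant integer exponents where a logarithm can enter; your bookkeeping of those cases fills that small gap.
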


\begin{corollary}\label{Deficiency Indices}
The minimal operator $S^0_{\alpha, \beta, m}$ in $L^2((-1,1),W_{\alpha,\beta, m})$ has the following deficiency indices:
\begin{enumerate}
\item{For $\alpha, \beta\geq 1$, $S^0_{\alpha, \beta, m}$ has deficiency index $(0,0)$.}
\item{For $\alpha \geq 1$ and $0<\beta<1$, $S^0_{\alpha, \beta, m}$ has deficiency index $(1,1)$.}
\item{Similarly, for $\beta \geq 1$ and $0<\alpha<1$, $S^0_{\alpha, \beta, m}$ has deficiency index $(1,1)$.}
\item{For $-1<\alpha,\beta<0$, $S^0_{\alpha, \beta, m}$ has deficiency index $(2,2)$.}
\end{enumerate}
\end{corollary}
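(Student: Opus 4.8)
The plan is to read the deficiency indices off Theorem~\ref{Frobenius Analysis} by means of the classical additivity of deficiency indices over subintervals; no computation beyond the Frobenius analysis already carried out is needed. First I would record two preliminary observations. Because the parameter restrictions \eqref{e-conditions} guarantee that $P_m^{(-\alpha-1,\beta-1)}$ has no zeros in $[-1,1]$, the expression $T_{\alpha,\beta,m}[\cdot]$ is regular on every compact subinterval $[c,d]\subset(-1,1)$, while both $x=-1$ and $x=1$ are singular endpoints (the leading coefficient $(1-x)^{\alpha+1}(1+x)^{\beta+1}/(P_m^{(-\alpha-1,\beta-1)})^2$ vanishes there). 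And since the coefficients of $T_{\alpha,\beta,m}[\cdot]$ are real on $(-1,1)$, the minimal operator $S^0_{\alpha,\beta,m}$ commutes with complex conjugation, hence has equal deficiency indices $(n,n)$; it suffices to determine the single number $n$.

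Next I would invoke the decomposition principle (see \cite[Ch.~V]{Glazman}, \cite[\S 17]{Naimark}, or \cite{Hellwig}): fixing $c\in(-1,1)$ and restricting $T_{\alpha,\beta,m}[\cdot]$ to $(-1,c]$ and to $[c,1)$ yields two symmetric expressions, each with one regular endpoint; the minimal operator of such an expression has deficiency index $2$ if its singular endpoint is limit-circle and $1$ if it is limit-point. Reassembling the two half-interval problems, $n = d_{-1} + d_1 - 2$ where $d_{-1},d_1\in\{1,2\}$ are those indices; equivalently, $n$ equals the number of limit-circle endpoints among $\{-1,1\}$.

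Finally I would substitute Theorem~\ref{Frobenius Analysis}. For $\alpha,\beta\ge 1$ both endpoints are limit-point, so $n=0$, which is item~(1). For $\alpha\ge 1$ with $0<\beta<1$, and symmetrically for $\beta\ge 1$ with $0<\alpha<1$, exactly one endpoint is limit-circle, so $n=1$, which is items~(2) and~(3). For $-1<\alpha,\beta<0$ we have in particular $-1<\alpha<1$ and $-1<\beta<1$, so both endpoints are limit-circle and $n=2$, which is item~(4).

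The one point that really needs care — and the reason the statement is presented as a consequence of Theorem~\ref{Frobenius Analysis} rather than proved from scratch — is that the limit-point/limit-circle classification, and therefore each half-interval deficiency index, must be independent of the spectral parameter. This is Weyl's invariance theorem; it is precisely what lets us use the $L^2$-integrability of the Frobenius solutions $z_1,z_2$ near $x=1$ and $y_1,y_2$ near $x=-1$ (computed in the analysis preceding Theorem~\ref{Frobenius Analysis}) to pin down the deficiency contribution at each endpoint.
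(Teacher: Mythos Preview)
Your argument is correct and is precisely the standard deduction the paper has in mind: the corollary is stated without proof as an immediate consequence of Theorem~\ref{Frobenius Analysis}, and your decomposition-principle reading (deficiency index equals the number of limit-circle endpoints) is exactly how one passes from the limit-point/limit-circle classification to the indices. There is nothing to add beyond what you have written.
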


Next we formulate the self-adjoint operators.

\begin{theorem}\label{t-SA-operator}
The self-adjoint operator $S_{\alpha,\beta,m}$ in
$L^{2}((-1,1);W_{\alpha,\beta, m}),$ generated by the exceptional
$X_m$-Jacobi differential expression $T_{\alpha, \beta,m}$ 
is 
given by%
\begin{align*}
S_{\alpha,\beta,m}[f]  &  =T_{\alpha, \beta,m}[f]\\
f  &  \in\mathcal{D}(S_{\alpha,\beta,m}),
\end{align*}
where
\begin{equation}\label{Boundary Conditions}
\mathcal{D}(S_{\alpha,\beta,m})=\left\{
\begin{array}
[c]{ll}%
\Delta & \text{if }\alpha\geq 1\text{ and }\beta\geq 1\medskip\\
\{f\in\Delta\mid\lim\limits_{x\rightarrow-1^{+}}(1+x)^{\beta+1}f^{\prime
}(x)=0\} & \text{if }\alpha\geq 1\text{ and }0<\beta<1\medskip\\
\{f\in\Delta\mid\lim\limits_{x\rightarrow 1^{-}}(1-x)^{\alpha+1}f^{\prime
}(x)=0\} & \text{if }0<\alpha<1\text{ and }\beta\geq 1\medskip\\%
\begin{array}
[c]{l}%
\!\!\!\{f\in\Delta\mid\lim\limits_{x\rightarrow 1^{-}}(1-x)^{\alpha
+1}f^{\prime}(x)\\
\qquad\qquad=\lim\limits_{x\rightarrow-1^{+}}(1+x)^{\beta+1}f^{\prime}(x)=0\}
\end{array}
&
\begin{array}
[c]{l}%
\!\!\text{for all other choices of parameters}\\
\!\!\text{that are allowed by \eqref{e-conditions}.}
\end{array}
\end{array}
\right. %
\end{equation}
\end{theorem}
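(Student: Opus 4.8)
The plan is to invoke the first Glazman--Krein--Naimark theorem (GKN1), which describes every self-adjoint extension of the minimal operator $S^{0}_{\alpha,\beta,m}$ as a restriction of the maximal operator $S^{1}_{\alpha,\beta,m}$ to a domain of the form
\[
\mathcal D(S)=\bigl\{\,f\in\Delta \ \bigm|\ [f,w_j]\big|_{-1}^{1}=0,\ j=1,\dots,d\,\bigr\},
\]
where $d$ is the common deficiency index --- which, by Theorem~\ref{Frobenius Analysis} and Corollary~\ref{Deficiency Indices}, equals the number of limit-circle endpoints --- and $w_1,\dots,w_d\in\Delta$ are linearly independent modulo $\mathcal D(S^{0}_{\alpha,\beta,m})$ and satisfy the compatibility relations $[w_i,w_j]\big|_{-1}^{1}=0$ for all $i,j$. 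The proof therefore reduces to: choosing concrete functions $w_j$, verifying the GKN hypotheses for them, translating the abstract conditions $[f,w_j]\big|_{-1}^{1}=0$ into the explicit one-sided limits displayed in \eqref{Boundary Conditions}, and finally checking that each $P_{m,n}^{(\alpha,\beta)}$ lies in the resulting domain.

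When $\alpha\ge1$ and $\beta\ge1$ the deficiency index is $(0,0)$, so $S^{0}_{\alpha,\beta,m}=S^{1}_{\alpha,\beta,m}$ is already self-adjoint and $\mathcal D(S_{\alpha,\beta,m})=\Delta$. In every other case I take the boundary-condition functions from among the constants and $x$: namely $w_1\equiv1$ when exactly one endpoint is limit-circle, and $w_1\equiv1$, $w_2(x)=x$ when both endpoints are limit-circle. Any polynomial $p$ belongs to $\Delta$, since it is smooth (hence $p,p'\in AC_{loc}(-1,1)$), it lies in $L^{2}((-1,1);W_{\alpha,\beta,m})$ because all moments of $W_{\alpha,\beta,m}$ are finite, and $T_{\alpha,\beta,m}[p]$ is bounded on $(-1,1)$ (the coefficient functions in \eqref{Differential Expression} are bounded there, as $P_m^{(-\alpha-1,\beta-1)}$ has no zero on $[-1,1]$) hence also in $L^{2}$. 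From \eqref{Sesquilinear Form} one computes $[1,1]\equiv[x,x]\equiv0$ and $[1,x](x)=-\,\frac{(1-x)^{\alpha+1}(1+x)^{\beta+1}}{(P_m^{(-\alpha-1,\beta-1)}(x))^{2}}$, which vanishes as $x\to\pm1$; hence all the numbers $[w_i,w_j]\big|_{-1}^{1}$ are zero. Linear independence of $\{w_j\}$ modulo $\mathcal D(S^{0}_{\alpha,\beta,m})$ is obtained by pairing against elements of $\Delta$ with prescribed one-sided endpoint behavior: at a limit-circle endpoint $x=1$ (respectively $x=-1$) the Frobenius solution $z_2$ (respectively $y_2$) from the analysis preceding Theorem~\ref{Frobenius Analysis}, glued to a smooth function away from that endpoint, lies in $\Delta$ and detects the constant mode.

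To convert $[f,w_j]\big|_{-1}^{1}=0$ into \eqref{Boundary Conditions}, set $L_1(f)=\lim_{x\to1^{-}}(1-x)^{\alpha+1}f'(x)$ and $L_{-1}(f)=\lim_{x\to-1^{+}}(1+x)^{\beta+1}f'(x)$; these limits exist for $f\in\Delta$ by the remarks following \eqref{Sesquilinear Form}. Using \eqref{Sesquilinear Form} together with the Frobenius asymptotics --- which show that the non-derivative contributions $(1-x)^{\alpha+1}f(x)$ and $(1+x)^{\beta+1}f(x)$ tend to $0$ --- one obtains $[f,1]\big|_{-1}^{1}=c_1L_1(f)-c_{-1}L_{-1}(f)$ and $[f,x]\big|_{-1}^{1}=c_1L_1(f)+c_{-1}L_{-1}(f)$, where $c_1=2^{\beta+1}/(P_m^{(-\alpha-1,\beta-1)}(1))^{2}$ and $c_{-1}=2^{\alpha+1}/(P_m^{(-\alpha-1,\beta-1)}(-1))^{2}$ are nonzero. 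Moreover a limit-point endpoint already forces the corresponding limit to vanish for every $f\in\Delta$ (limit-point at $x=1$ means $[f,g](1)=0$ for all $f,g\in\Delta$, while $[f,1](1)=c_1L_1(f)$), so imposing both limits is harmless even when only one endpoint is limit-circle. Combining these facts, the GKN domain collapses, in each parameter regime allowed by \eqref{e-conditions}, to exactly the description in \eqref{Boundary Conditions}. Finally, since each $P_{m,n}^{(\alpha,\beta)}$ is a polynomial we have $L_{\pm1}(P_{m,n}^{(\alpha,\beta)})=0$ trivially, and $T_{\alpha,\beta,m}[P_{m,n}^{(\alpha,\beta)}]=\lambda_nP_{m,n}^{(\alpha,\beta)}\in L^{2}((-1,1);W_{\alpha,\beta,m})$, so these polynomials lie in $\mathcal D(S_{\alpha,\beta,m})$ and are eigenfunctions.

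I expect the main obstacle to be this last translation step: justifying rigorously, from the Frobenius expansions near the regular singular endpoints, both that the limits $L_{\pm1}(f)$ exist for every $f$ in the maximal domain and that the non-derivative terms in the sesquilinear form \eqref{Sesquilinear Form} die in the limit --- and carrying this out uniformly over all parameter values permitted by \eqref{e-conditions}, including the borderline exponents at which the Frobenius solutions acquire logarithmic factors.
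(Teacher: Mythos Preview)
Your approach is essentially the same as the paper's: invoke GKN theory, use the constant function $1$ (and, in the $(2,2)$ case, a second polynomial) as the GKN boundary-condition element, and verify its linear independence modulo the minimal domain by pairing against the Frobenius-type function $(1-x)^{-\alpha}$ near $x=1$ (respectively $(1+x)^{-\beta}$ near $x=-1$). In fact your write-up is more complete than the paper's own proof, which checks only that $1\notin\mathcal D(S^{0}_{\alpha,\beta,m})$ and leaves the translation from $[f,1]\big|_{-1}^{1}=0$ to the explicit limits $\lim(1\mp x)^{\gamma+1}f'(x)=0$ implicit; your identification of that translation step---and the attendant need to control the non-derivative term for arbitrary $f\in\Delta$---as the crux is exactly right.
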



\begin{proof}
If the parameters satisfy $\alpha, \beta \geq 1$, then there is only one self adjoint extension (restriction) of the minimal operator $S_{\alpha,\beta,m}^0$ (maximal operator $S_{\alpha,\beta,m}^1$); that is, the maximal and minimal operator coincide and $S_{\alpha,\beta, m}=S_{\alpha,\beta,m}^0=S_{\alpha,\beta, m}^1$.

Suppose that $0<\alpha< 1$ and $\beta \geq 1$, then there are infinitely many self-adjoint extensions of the minimal operator $S_{\alpha,\beta,m}^0$.  From Corollary \ref{Deficiency Indices}, the deficiency index equals $(1,1)$, which means that $\mathcal D(S^0_{\alpha,\beta,m})$ is a subspace of codimension 2 in $\Delta$.  We will restrict the maximal domain $\Delta$ by imposing a suitable boundary condition which is invoked by the sesquilinear form $[\cdot,\cdot](\cdot)$ defined by \eqref{Sesquilinear Form}.  First note that $h(x)=(1-x)^{-\alpha}\in \Delta$ since
\[
T_{\alpha,\beta, m}[(1-x)^{-\alpha}]=\mathcal O ((1-x)^{-\alpha}) \quad \mbox{(near }x=1\mbox{)},
\] which implies $T_{\alpha,\beta, m}[g]\in L^2((-1,1);W_{\alpha,\beta, m})$ because $\alpha<1$.  Further, the constant function satisfies $1\in \Delta$ and 
\begin{equation}\label{label}
\left[h, 1\right]\mid_{x=-1}^{x=1}=\left[h,1\right](1)=-\dfrac{\alpha\, 2^{\beta+1}}{\left(P_{m}^{(-\alpha-1,\beta-1)}(1)\right)^2} = \alpha \,2^{\beta+1}\neq 0\,,
\end{equation}
where we used standard identities for the Jacobi polynomials and the Gamma function to find
\[
P_{m}^{(-\alpha-1,\beta-1)}(1) = \frac{\Gamma(-\alpha + m)}{m!\,\Gamma(\beta+ m-\alpha-1)}\frac{\Gamma(\beta+ m-\alpha-1)}{\Gamma(-\alpha)} = \frac{\Gamma(-\alpha + m)}{m!\,\Gamma(-\alpha)}
= \frac{m!\,\Gamma(-\alpha)}{m!\,\Gamma(-\alpha)} = 1.
\]
In particular, we obtain from equation \eqref{label} that the constant function $1\notin \mathcal D(S^0_{\alpha,\beta,m}).$

For $0<\beta< 1$ and $\alpha \geq 1$, we can prove the corresponding statement in a similar manner.  
Lastly, for $\alpha, \beta \leq 1$, we combine the above cases.
\end{proof}

Note that every polynomial, in particular the $X_m$-Jacobi polynomials, will satisfy all of the boundary conditions given by \eqref{Boundary Conditions}.

Next, we adapt ideas introduced in \cite{KMUG3} and further developed in \cite{Liaw-Littlejohn-Milson-Stewart} (for the case of exceptional Laguerre orthogonal polynomial systems) to prove that the spectrum of the self-adjoint operators from Theorem \ref{t-SA-operator} consists exactly of the eigenvalues corresponding to the exceptional $X_m$-Jacobi polynomials (and nothing more).

\begin{theorem}\label{t-completeness}
The exceptional $X_m$-Jacobi polynomials $\left\{P_{m,n}^{(\alpha,\beta)}\right\}_{n=m}^\infty$ form a complete set of eigenfunctions of the self-adjoint operator $S_{\alpha,\beta,m}$ in $L^2((-1,1),W_{\alpha,\beta, m})$.  Additionally, the spectrum $\sigma(S_{\alpha,\beta,m})$ of $S_{\alpha,\beta, m}$ is pure discrete spectrum consisting of the simple eigenvalues
\[
\sigma(S_{\alpha, \beta,m})=\sigma_p(S_{\alpha, \beta, m})=\left\{-(n-m)(1+\alpha+\beta+n-m)\mid n\geq m\right\}\,.
\]
\end{theorem}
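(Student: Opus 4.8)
The plan is to show that the exceptional $X_m$-Jacobi polynomials span a dense subspace of $L^2((-1,1);W_{\alpha,\beta,m})$ that lies in $\mathcal{D}(S_{\alpha,\beta,m})$, so that the resolvent of $S_{\alpha,\beta,m}$ is compact; once compactness is established, standard spectral theory for self-adjoint operators with compact resolvent forces the spectrum to be pure point, discrete, and accumulating only at infinity, and the eigenfunctions form a complete orthonormal system. The eigenvalue list then follows from the eigenvalue equation $T_{\alpha,\beta,m}[P_{m,n}^{(\alpha,\beta)}]=\lambda_n P_{m,n}^{(\alpha,\beta)}$ recorded in Section~\ref{s-Jacobi}, together with simplicity, which comes from the fact that the degrees $n=m,m+1,m+2,\dots$ are distinct, so the $\lambda_n$ are distinct for large $n$ and each eigenspace is at most one-dimensional by the limit-point/limit-circle classification of Theorem~\ref{Frobenius Analysis} (a second-order expression has at most a one-dimensional $L^2$-eigenspace unless it is limit-circle at both ends, and even then the boundary conditions in \eqref{Boundary Conditions} cut it down to one dimension).

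The key steps, in order. First I would invoke the known fact (stated at the end of Section~\ref{s-Jacobi}, from \cite{KMUG2,Zeros}) that $\{P_{m,n}^{(\alpha,\beta)}\}_{n=m}^\infty$ is complete in $L^2((-1,1);W_{\alpha,\beta,m})$; alternatively, following \cite{KMUG3,Liaw-Littlejohn-Milson-Stewart}, one derives completeness from density of polynomials in the weighted $L^2$ space (the weight has finite moments) combined with the observation that the span of $\{P_{m,n}^{(\alpha,\beta)}\}_{n\ge m}$ together with the missing low-degree directions is handled by the fact that the exceptional flag is $T_{\alpha,\beta,m}$-invariant. Second, I would check that each $P_{m,n}^{(\alpha,\beta)}\in\mathcal{D}(S_{\alpha,\beta,m})$: polynomials are trivially in $AC_{loc}$ with $f,T_{\alpha,\beta,m}[f]\in L^2$ (finite moments again), and, as remarked after Theorem~\ref{t-SA-operator}, every polynomial satisfies all the boundary conditions in \eqref{Boundary Conditions} since $(1\pm x)^{\gamma+1}f'(x)\to 0$ for $\gamma>-1$ and $f$ polynomial. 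Third, I would assemble these: the operator $S_{\alpha,\beta,m}$ is self-adjoint (Theorem~\ref{t-SA-operator}), it has an orthonormal basis of eigenvectors $P_{m,n}^{(\alpha,\beta)}/\|P_{m,n}^{(\alpha,\beta)}\|$ with real eigenvalues $\lambda_n\to-\infty$, hence by the spectral theorem $\sigma(S_{\alpha,\beta,m})=\overline{\{\lambda_n:n\ge m\}}=\{\lambda_n:n\ge m\}$ (the set is already closed since $\lambda_n\to-\infty$), with no continuous spectrum and no residual spectrum. Fourth, simplicity: since $\lambda_n=-(n-m)(1+\alpha+\beta+n-m)$ is strictly decreasing in $n-m\ge 0$, all $\lambda_n$ are distinct, so each eigenvalue has multiplicity exactly one.

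The main obstacle is the completeness input itself — showing that \emph{no} eigenfunction of the formal expression outside the polynomial family survives in the self-adjoint domain, equivalently that the polynomials are dense. If one does not simply cite \cite{KMUG2,Zeros}, the argument of \cite{KMUG3} must be adapted: one shows that a function orthogonal to all $P_{m,n}^{(\alpha,\beta)}$, $n\ge m$, is orthogonal to a set of polynomials whose span is dense, using that the gauge factor $P_m^{(-\alpha-1,\beta-1)}(x)$ relates the exceptional weight to the classical Jacobi weight and that multiplication by the (bounded, bounded-away-from-zero on $[-1,1]$) polynomial $P_m^{(-\alpha-1,\beta-1)}$ is a bounded invertible map intertwining the two $L^2$ spaces up to a finite-dimensional correction; the finitely many "missing" degrees $0,\dots,m-1$ are accounted for by the explicit structure of the exceptional family. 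Once density is in hand, everything else is routine spectral theory, and I would keep that part brief. A secondary, minor point to verify carefully is that in the limit-circle cases the imposed boundary conditions are exactly those making $P_{m,n}^{(\alpha,\beta)}$ admissible while still defining a self-adjoint (not merely symmetric) operator — but that is precisely the content of Theorem~\ref{t-SA-operator}, which we may assume.
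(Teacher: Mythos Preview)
Your proposal is correct in outline and, in its second alternative (proving completeness rather than citing it), follows essentially the same route as the paper: pass to the classical Jacobi space via the gauge factor $P_m^{(-\alpha-1,\beta-1)}$, use density of polynomials there, and then argue that the resulting approximants lie in the span of the exceptional polynomials. The paper, however, does not cite completeness but proves it, and the one place where your sketch is genuinely vague --- ``up to a finite-dimensional correction'' and ``the missing degrees are accounted for by the explicit structure of the exceptional family'' --- is exactly where the paper does the real work. Concretely, the paper first proves a short lemma that for any $\eta$ bounded and bounded away from zero on $[-1,1]$ the set $\eta\mathcal{P}$ is dense in $L^2((-1,1);W_{\alpha,\beta})$; applying this with $\eta=P_m^{(-\alpha-1,\beta-1)}$ yields approximants of the form $\bigl(P_m^{(-\alpha-1,\beta-1)}\bigr)^2 p$ in $L^2((-1,1);W_{\alpha,\beta,m})$. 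To show these lie in $\operatorname{span}\{P_{m,n}^{(\alpha,\beta)}\}$, the paper introduces the $(n+m+1)$-dimensional space $\mathcal{F}_{n+2m}=\{q\in\mathcal{P}_{n+2m}:(1+x_i)q'(x_i)+\beta q(x_i)=0\text{ at each root }x_i\text{ of }P_m^{(-\alpha-1,\beta-1)}\}$, observes that $\bigl(P_m^{(-\alpha-1,\beta-1)}\bigr)^2 p\in\mathcal{F}_{n+2m}$ trivially, and then shows $\mathcal{E}_{n+2m}:=\operatorname{span}\{P_{m,j}^{(\alpha,\beta)}:m\le j\le n+2m\}\subset\mathcal{F}_{n+2m}$ by checking that the only non-polynomial term in $T_{\alpha,\beta,m}[Q]$ forces the $\mathcal{F}$-conditions; equality of dimensions then gives $\mathcal{E}_{n+2m}=\mathcal{F}_{n+2m}$. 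Everything else in your plan (membership of polynomials in $\mathcal{D}(S_{\alpha,\beta,m})$, the spectral-theorem consequence, simplicity via distinctness of the $\lambda_n$) matches the paper or is routine; the compact-resolvent detour is unnecessary once you have an orthonormal basis of eigenfunctions, as you yourself note in step three.
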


\begin{proof}
The eigenvalue equations follow by the Darboux relations. 

It remains to prove the completeness of $\left\{P_{m,n}^{(\alpha,\beta)}\right\}_{n=m}^\infty$ in $L^2((-1,1),W_{\alpha,\beta, m})$. Fix $\alpha, \beta$ in the allowed range, pick $f\in \mathcal{H} = L^2((-1,1),W_{\alpha,\beta, m})$ and choose $\varepsilon>0$.

Define the function
\[
\widetilde f(x) := \frac{f(x)}{P_m^{(-\alpha-1,\beta-1)}(x)}\,.
\]
From the relationship $$W_{\alpha,\beta,m}(x) = \frac{W_{\alpha,\beta}(x)}{\left(P_m^{(-\alpha-1,\beta-1)}(x)\right)^2}$$ 
between the exceptional and the classical weight ($W_{\alpha,\beta,m}$ and $W_{\alpha,\beta}$, respectively), it easily follows
$$\|f\|_\mathcal{H} = \|\widetilde f\|_{L^2((-1,1);W_{\alpha,\beta})}.$$
In particular, we have $\widetilde f\in L^2((-1,1);W_{\alpha,\beta})$. 

Next we apply Lemma \ref{l-eta} with the function
\[
\eta(x) = P_m^{(-\alpha-1,\beta-1)}(x)
\]
and obtain the existence of $p\in\mathcal{P}$ such that
\[
\left\|\widetilde f - P_m^{(-\alpha-1,\beta-1)}(x) p(x) \right\|_{L^2((-1,1);W_{\alpha,\beta})}
<\varepsilon^2.
\]
Let $N$ be the degree of $p$.
With this polynomial $p$ we can compute
\begin{align*}
\varepsilon^2
&>
\left\|\widetilde f - P_m^{(-\alpha-1,\beta-1)}(x) p(x) \right\|_{L^2((-1,1);W_{\alpha,\beta})}
 =
\left\|f - \left(P_m^{(-\alpha-1,\beta-1)}(x)\right)^2 p(x) \right\|_{\mathcal H} .
\end{align*}

Our goal is to show that the approximant $\left(P_m^{(-\alpha-1,\beta-1)}(x)\right)^2 p(x)$ is contained in the (closure of the) vector space spanned by the exceptional Jacobi polynomials.
To this end, we consider two $(n+m+1)$-dimensional vector spaces
\begin{align*}
\mathcal{E}_{n+2m}&:=\left\{P_{m,j}^{(\alpha, \beta)}:j=m, m+1, \hdots, n+2m \right\} \text{ and }\\
\mathcal{F}_{n+2m}&:=\left\{q\in \mathcal{P}_{n+2m}:(1+x_i)q'(x_i)+\beta q(x_i) = 0\right\},
\end{align*}
where we let $x_i$ denote the $m-1$ roots of the polynomial $P_m^{(-\alpha-1,\beta-1)}(x)$.
The space $\mathcal{F}_{n+2m}$ is motivated by the exceptional term in the exceptional Jacobi differential expression.
Clearly, we have $\left(P_m^{(-\alpha-1,\beta-1)}(x)\right)^2 p(x)\in \mathcal{F}_{n+2m}.$ Since $\text{dim}\,\mathcal{F}_{n+2m} = \text{dim}\,\mathcal{E}_{n+2m}$ we achieve our goal, if we can show that
\[
\mathcal{E}_{n+2m}\subset
\mathcal{F}_{n+2m}.
\]

Take $Q\in \mathcal{E}_{n+2m}$. Since $\mathcal{E}_{n+2m}$ is spanned by a basis of eigenvectors of the exceptional $X_m$-Jacobi differential expression $T_{\alpha,\beta,m}$ we have $T_{\alpha,\beta,m}[\mathcal{E}_{n+2m}]\subset \mathcal{E}_{n+2m}$. It follows that
\begin{align*}
T_{\alpha,\beta,m}[Q] &:=(1-x^2)Q''+(\beta-\alpha-(\beta+\alpha+2)x-2(1-x^2)\left(\log(P_m^{(-\alpha-1,\beta-1)})\right)'Q' \\
&\quad +(\alpha-\beta-m+1)m-2\beta(1-x)\left(\log(P_m^{(-\alpha-1,\beta-1)})\right)'Q
\end{align*}
is polynomial, and hence the exceptional term (that is, the only term with a denominator):
\[
-2(1-x) \frac{\left(P_m^{(-\alpha-1,\beta-1)}(x)\right)'}{P_m^{(-\alpha-1,\beta-1)}(x)}\left[(1+x)Q'(x)+\beta Q(x)\right]
\]
is polynomial. Since the roots of the classical orthogonal are simple and 1 is not a root, we have
\[
(1+x_i)Q'(x_i)+\beta Q(x_i) = 0.
\]
We obtain $Q\in \mathcal{F}_{n+2m}$ as desired.
\end{proof}

Let $\mathcal{P}$ denote the set of all polynomials.

\begin{lemma}\label{l-eta}
Given a function $\eta$ on $[-1,1]$ that satisfies $0<c<|\eta(x)|<C<\infty$ for all $x\in [-1,1]$. Then the set $\{\eta(x)p(x):p\in\mathcal{P}\}$ is dense in $L^2((-1,1);W_{\alpha,\beta})$ for the classical range of parameters $\alpha, \beta$, and the classical Jacobi weight $W_{\alpha,\beta}$.
\end{lemma}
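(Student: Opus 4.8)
The plan is to reduce the lemma to the classical statement that the ordinary polynomials $\mathcal{P}$ are dense in $L^2((-1,1);W_{\alpha,\beta})$, where $W_{\alpha,\beta}(x)=(1-x)^{\alpha}(1+x)^{\beta}$ is the classical Jacobi weight with $\alpha,\beta>-1$. This density is standard: the interval $(-1,1)$ is bounded and $W_{\alpha,\beta}$ is integrable with finite moments of all orders, so $C[-1,1]$ is dense in $L^2((-1,1);W_{\alpha,\beta})$, and the Weierstrass approximation theorem then shows that $\mathcal{P}$ is dense; equivalently, the classical Jacobi polynomials form a complete orthogonal system in this space. I would simply invoke this.

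The key observation is that, under the hypothesis $0<c<|\eta(x)|<C<\infty$, multiplication by $\eta$ is a bounded and boundedly invertible operator on $L^2((-1,1);W_{\alpha,\beta})$: the map $M_\eta\colon g\mapsto \eta g$ satisfies $\|\eta g\|\le C\|g\|$, while its inverse $M_{1/\eta}\colon g\mapsto g/\eta$ satisfies $\|g/\eta\|\le c^{-1}\|g\|$. A linear homeomorphism of a Banach space onto itself carries dense subspaces to dense subspaces, and $M_\eta(\mathcal{P})=\{\eta p:p\in\mathcal{P}\}$; hence the latter set is dense.

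To match the $\varepsilon$-style of the surrounding argument I would phrase this directly. Fix $f\in L^2((-1,1);W_{\alpha,\beta})$ and $\varepsilon>0$. Since $|\eta|>c$, the function $f/\eta$ again lies in $L^2((-1,1);W_{\alpha,\beta})$, so by the classical density there is $p\in\mathcal{P}$ with $\|f/\eta-p\|_{L^2((-1,1);W_{\alpha,\beta})}<\varepsilon/C$. Then
\[
\|f-\eta p\|_{L^2((-1,1);W_{\alpha,\beta})}=\|\eta(f/\eta-p)\|_{L^2((-1,1);W_{\alpha,\beta})}\le C\,\|f/\eta-p\|_{L^2((-1,1);W_{\alpha,\beta})}<\varepsilon,
\]
which proves the claim.

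I do not expect a serious obstacle: the entire content is the classical completeness of the Jacobi polynomials together with the trivial norm bounds for $M_\eta$ and $M_{1/\eta}$. The only point deserving a sentence of care is that $\eta$ should be (at least) measurable for $\eta p$ to define an element of $L^2$; in the application of the lemma $\eta=P_m^{(-\alpha-1,\beta-1)}$ is a polynomial, hence continuous on $[-1,1]$ and, by the parameter restrictions \eqref{e-conditions}, nonvanishing there, so the constants $c,C$ exist and all the manipulations above are legitimate.
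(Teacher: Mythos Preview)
Your proof is correct and follows essentially the same approach as the paper: both arguments divide the target function by $\eta$ (using the lower bound $|\eta|>c$ to stay in $L^2$), approximate the quotient by a polynomial via the classical density of $\mathcal{P}$, and then multiply back by $\eta$ (using the upper bound $|\eta|<C$). The only cosmetic difference is that the paper first reduces to showing $\mathcal{P}\subset\mathrm{clos}_{\mathcal{H}}(\eta\mathcal{P})$ and then runs the estimate for $p\in\mathcal{P}$, whereas you apply the same estimate directly to an arbitrary $f\in L^2((-1,1);W_{\alpha,\beta})$.
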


\begin{proof}
Fix $\alpha, \beta$ in the classical parameter range; that is, $\alpha, \beta> -1$. Then, by the theory of classical orthogonal polynomials, the polynomials $\mathcal{P}$ are dense in $\mathcal{H} = L^2((-1,1); W_{\alpha,\beta})$. Therefore, it suffices to show that
\[
\mathcal{P}\subset \textrm{clos}_{\mathcal{H}}(\eta \mathcal{P}).
\]

To show this, take $p\in \mathcal{P}$ and fix $\varepsilon>0$. First observe that
\[
\|p/\eta\|_\mathcal{H}\le
(1/c) \|p\|_\mathcal{H},
\]
so that $p/\eta \in \mathcal{H}$.
By taking $q\in \mathcal{P}$ such that
\begin{align*}
\varepsilon^2
>
C^2 \|p/\eta - q\|_\mathcal{H}^2
\ge
\|(p/\eta - q)\eta\|_\mathcal{H}^2
=
\|p- \eta q\|_\mathcal{H}^2,
\end{align*}
the lemma is proved.
\end{proof}

\end{document}